\theoremstyle{plain}
\newtheorem{theorem}{Theorem}[section]
\newtheorem{lemma}[theorem]{Lemma}
\newtheorem{corollary}[theorem]{Corollary}
\newtheorem{conjecture}[theorem]{Conjecture}
\theoremstyle{definition}
\newtheorem{definition}[theorem]{Definition}
\newcommand{\ih}{\hat{\imath}}
\newcommand{\jh}{\hat{\jmath}}
\DeclareMathOperator{\sgn}{sgn}
\DeclareMathOperator{\Spec}{Spec}
\let\latexcirc=\circ
\newcommand{\ccirc}{\mathbin{\mathchoice
  {\xcirc\scriptstyle}
  {\xcirc\scriptstyle}
  {\xcirc\scriptscriptstyle}
  {\xcirc\scriptscriptstyle}
}}
\newcommand{\xcirc}[1]{\vcenter{\hbox{$#1\latexcirc$}}}
\let\circ\ccirc
\begin{document}
\title{Pierce--Birkhoff conjecture is true for splines}
\author[Z.~Lai]{Zehua~Lai}
\address{Department of Mathematics, University of Texas, Austin, TX 78712}
\email{zehua.lai@austin.utexas.edu}
\author[L.-H.~Lim]{Lek-Heng~Lim}
\address{Computational and Applied Mathematics Initiative, Department of Statistics,
University of Chicago, Chicago, IL 60637}
\email{lekheng@uchicago.edu}

\begin{abstract}
We prove the Pierce--Birkhoff conjecture for splines, i.e.,  continuous piecewise polynomials of degree $d$ in $n$ variables on a hyperplane partition of $\mathbb{R}^n$, can be written as a finite lattice combination of polynomials. We will provide a purely existential proof, followed by a more in-depth analysis that yields effective bounds.
\end{abstract}
\maketitle

\section{Introduction}\label{sec:intro}

The Pierce--Birkhoff conjecture states that every continuous piecewise polynomial function on a semialgebraic partition of $\mathbb{R}^n$ is a finite lattice combination of polynomials, i.e., the functions generated by addition, multiplication, and maximization of polynomial functions on $\mathbb{R}^n$. To be precise, a continuous function $f : \mathbb{R}^n \to \mathbb{R}$ that takes a form
\begin{equation}\label{eq:def}
f = g_1 \mathbbm{1}_{\Pi_1} + \dots +  g_r \mathbbm{1}_{\Pi_r}
\end{equation}
where $g_1,\dots,g_r \in \mathbb{R}[x_1,\dots,x_n]$,
\begin{equation}\label{eq:part}
\mathbb{R}^n = \Pi_1 \cup \dots \cup \Pi_r
\end{equation}
a partition, and $\mathbbm{1}_{\Pi}$ the indicator function, is a \emph{spline} of degree $d \coloneqq \max \{ \deg g_1, \dots, \deg g_r \}$ if \eqref{eq:part} is a hyperplane partition, i.e., $\Pi_1,\dots,\Pi_r$ are polytopes. This is the near universal definition of splines in both pure \cite{Billera, Billera1989, Plaumann2012} and applied mathematics \cite{Chui, BoxSpline, Wang2002}. It is equivalent to requiring that \eqref{eq:part} be a triangulation, i.e., $\Pi_1,\dots,\Pi_r$ are simplices, since any hyperplane partition may be refined into a triangulation and vice versa. It also includes all smoother $C^k$-variants as special cases, i.e., the results that we prove in this article for will hold for all $C^k$-splines, $k = 0,1,2,\dots.$

There is however a generalization called \emph{semialgebraic splines} \cite{dipasquale2020, dipasquale2017, shek} that allows for $\Pi_1,\dots,\Pi_r$ to be  semialgebraic sets. In this terminology, the Pierce--Birkhoff conjecture states:
\begin{conjecture}[Pierce--Birkhoff]\label{conj:PB}
Any semialgebraic spline $f : \mathbb{R}^n \to \mathbb{R}$ can be expressed as
\begin{equation}\label{eq:PB}
f=\max_{i=1,\dots,p}\min_{j=1,\dots,p'} f_{ij}
\end{equation}
with finitely many polynomials $f_{ij} \in \mathbb{R}[x_1,\dots,x_n]$, $i =1,\dots,p$, $j=1,\dots,p'$.
\end{conjecture}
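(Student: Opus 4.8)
The plan is to reduce Conjecture~\ref{conj:PB} to a local separation problem on the real spectrum of $\mathbb{R}[x_1,\dots,x_n]$, to solve that problem on a resolution of the partition where the spline becomes \emph{locally} a hyperplane‑partition spline — so that the main theorems of this paper apply — and then to descend.

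\textbf{Step 1: reduction to local separation.} By the classical reduction of Mah\'e and Delzell, a continuous semialgebraic function $f$ on $\mathbb{R}^n$ admits a representation \eqref{eq:PB} if and only if it is \emph{locally separated}: for every pair of points $\alpha,\beta$ in the real spectrum $\Spec_r\mathbb{R}[x_1,\dots,x_n]$ there is a polynomial $g_{\alpha\beta}$ with $g_{\alpha\beta}-f\ge 0$ near $\alpha$ and $f-g_{\alpha\beta}\ge 0$ near $\beta$, where ``near'' is meant in the local ring at each prime cone. A compactness argument on the spectral space $\Spec_r\mathbb{R}[x_1,\dots,x_n]$ then shows that finitely many of the $g_{\alpha\beta}$ suffice and reassembles them into the $\max$--$\min$ formula \eqref{eq:PB}. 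Since $f$ is a semialgebraic spline as in \eqref{eq:def}--\eqref{eq:part}, near $\alpha$ it equals one of its defining polynomials, say $p_\alpha$, and near $\beta$ another, $p_\beta$; the task is therefore to sandwich $p_\alpha$ and $p_\beta$ by a single polynomial, subject only to the constraint forced by continuity of $f$ across any common specialisation of $\alpha$ and $\beta$.

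\textbf{Step 2: resolving the partition.} The separation data depend only on the walls $\{p_i=p_j\}$ of the partition and on the boundaries of the pieces $\Pi_1,\dots,\Pi_r$; let $D\subset\mathbb{R}^n$ be a hypersurface containing all of these. By resolution of singularities there is a proper birational morphism $\pi\colon Y\to\mathbb{R}^n$, a composite of blowups along smooth centres, with $\pi^{-1}(D)$ a simple normal crossings divisor. In coordinates on $Y$ adapted to the crossings every wall of the pulled‑back spline $f\circ\pi$ is a coordinate hyperplane, so in such coordinates $f\circ\pi$ is a continuous piecewise polynomial on a \emph{hyperplane} partition — exactly the class for which this article establishes the Pierce--Birkhoff property. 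Applying those results (in suitable local coordinates on $Y$) produces, over every point above $\alpha$ and above $\beta$, polynomials that sandwich $f\circ\pi$.

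\textbf{Step 3: descent, and the main obstacle.} It remains to push the separators on $Y$ back down to $\mathbb{R}^n$. This needs (i) that $\pi$ induce a map of real spectra compatible with the separation problem, so that separating $f\circ\pi$ over all points above $(\alpha,\beta)$ forces separation of $f$ at $(\alpha,\beta)$; and (ii) that the blown‑up, coordinate‑wise separators be convertible into honest polynomials in $x_1,\dots,x_n$. Part (ii) is the crux, and it is precisely the point a naive reduction to the spline case misses: via the valuation theory of the real spectrum it is equivalent to control of the \emph{separating ideal} of the pair $(\alpha,\beta)$ after resolution, which is the content of Madden's connectedness conjecture and the theory of separating ideals, and which is where the difficulty concentrates for $n\ge 3$ (for $n\le 2$ the needed input is available, consistent with the conjecture being a theorem there). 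The polyhedral and simplicial analysis of this paper settles everything on the resolved side; supplying the valuation‑theoretic bridge of Step~3(ii) is what a complete proof of Conjecture~\ref{conj:PB} must still do.
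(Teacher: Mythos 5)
There is a genuine gap here, and in fact you name it yourself. The statement you are asked to prove is labelled a \emph{conjecture} in the paper, and the paper does not prove it: it proves only the special case of splines on hyperplane partitions (Theorem~\ref{thm:PBlinear1}), leaving the general semialgebraic case open. Your Step~3(ii) --- converting the separators obtained on the resolution $Y$ back into polynomials on $\mathbb{R}^n$, equivalently controlling the separating ideal $\langle\alpha,\beta\rangle$ under the blowup --- is precisely the open content of the local Pierce--Birkhoff / connectedness conjecture for $n\ge 3$, and you explicitly concede that a complete proof ``must still do'' it. A proof that ends by pointing at the unsolved problem is not a proof; the honest conclusion of your argument is that the conjecture remains open, which is exactly the paper's position.

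Beyond the admitted gap, Step~2 does not actually deliver what you claim. Making $\pi^{-1}(D)$ a simple normal crossings divisor gives local (analytic or \'etale) coordinates in which the \emph{walls} are coordinate hyperplanes, but this is not a hyperplane partition of $\mathbb{R}^n$ in the sense of Definition~\ref{def:par}: the pieces in such a chart need not be convex polytopes globally, the adapted coordinates are not polynomial coordinate changes on $\mathbb{R}^n$, and $f\circ\pi$ expressed in them is in general only piecewise regular, not piecewise polynomial of bounded degree. The paper's proof of Theorem~\ref{thm:PBlinear1} leans essentially on convexity --- the separating ideal $\langle V_i,V_j\rangle$ is computed via strictly separating hyperplanes for the tangent cones of convex polytopes at a point of $V_i\cap V_j$ --- and none of that convex-geometric input survives the passage to charts on $Y$. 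The paper itself flags why the general case is hard: separating ideals of disjoint semialgebraic sets can be zero (e.g.\ $U=\{xy\ge 1,\,x>0\}$ and its complement), so no separation argument of this kind can work without new ideas. Your outline is a reasonable map of where the difficulty lives, but as a proof of Conjecture~\ref{conj:PB} it establishes nothing beyond what Theorem~\ref{thm:PBlinear1} already gives.
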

Despite its apparent simplicity and longevity (first in-print appearance in 1962), the conjecture has resisted all attempts so far. Aside from the case $d = 1$ and the case $n \le 2$, the Pierce--Birkhoff conjecture is not known to hold for any other values of $d$ and $n$. We will prove that it holds for splines, i.e., when the partition \eqref{eq:part} is a hyperplane partition, for all  $d$ and $n$.

\subsection*{Brief history and prior attempts} The Pierce--Birkhoff conjecture, as noted in \cite{Madden2011}, was first formally formulated by Henriksen and Isbell in  \cite{HI}, with roots in Birkhoff and Pierce's earlier work \cite{BP} in 1956. It has been shown to be true for $n = 1$ and $2$ in \cite{Mahe1984} and for $d= 1$ in \cite{Ovchinnikov2002}. These results have spawned many variations: For $n  = 1$ it has been extended to nonsingular curves \cite{Madden1989} and semialgebraic curves \cite{PB4}. For $n =2$ it has been extended to arbitrary ordered fields \cite{Delzell89}, nonsingular two-dimensional real affine varieties \cite{Wagner10}, regular local rings of dimension two \cite{Lucas12}; it has also been restricted to the hypercube \cite{Lapenta15} and generalized to signomials \cite{Delzell10}. Nevertheless we emphasize that none of these variations go beyond $n = 2$. There are only limited partial results for the $n = 3$ case \cite{PB3, Lucas15};  the latter work \cite{Lucas15} is a partial result for the $n = 3$ case of the connectedness conjecture, which is known to imply the Pierce--Birkhoff conjecture \cite{Lucas09}.

\subsection*{Our contributions} We will show that Conjecture~\ref{conj:PB} holds for any hyperplane partition. We first provide a short existential proof that uses the language of real spectrum \cite{bochnak2013real} and the notion of separating ideals \cite{Madden1989}.  We then give a more careful construction to deduce explicit bounds for $p, p'$ and the degrees of $f_{ij}$'s in terms of $r$, $d$, and $n$.

\section{Splines and semialgebraic splines}\label{sec:splines}

We begin with a more formal description of partitions and splines. For any $b \in \mathbb{N}$, we define the set of all $3^b$ ternary bits by
\begin{equation}\label{eq:Phi}
\Theta_b \coloneqq \bigl\{ \theta: \{1, \dots, b\} \to \{1, 0, -1\} \bigr\}.
\end{equation}
\begin{definition}[Partitions]\label{def:par}
Let $P \coloneqq \{\pi_1, \dots, \pi_b\} \subseteq \mathbb{R}[x_1, \dots, x_n]$ be a set of nonconstant polynomials of degrees not more than $d'$. For any $\theta \in \Theta_b$, we define
\[
\Pi_{\theta} \coloneqq \{x \in\mathbb{R}^n : \sgn(\pi_i(x)) = \theta(i), \; i = 1,\dots,b\}.
\]
Then $\{\Pi_{\theta} : \theta \in \Theta_b \}$ is called a \emph{semialgebraic partition} of $\mathbb{R}^n$ of degree $d'$ induced by $\pi_1,\dots,\pi_b$ \cite{bochnak2013real}. If $d' =1$, i.e., $\pi_1,\dots,\pi_b$ are all degree-one polynomials, then it is called a \emph{hyperplane partition} \cite{Aguiar}. 
We will call each $\Pi_\theta$ a \emph{piece} of the partition. In a semialgebraic partition, any piece $\Pi_\theta$ has a well-defined dimension \cite{bochnak2013real}.
\end{definition}
By far most common and classical partition is a triangulation, i.e., a subdivision of $\mathbb{R}^n$ into a finite simplicial complex \cite{hatcher} or one may also use convex polytopes in place of simplices \cite{Chui, BoxSpline, Wang2002}. The important point to note is that any such ``piecewise linear'' partitions may be refined into a hyperplane partition as in Definition~\ref{def:par}, even if they are not at first defined in this manner.

A partition in the sense  of Definition~\ref{def:par} uniquely determines a partition in the sense of \eqref{eq:part} with $r = 3^b$, i.e.,
\[
\mathbb{R}^n = \bigcup_{\theta \in \Theta_b} \Pi_\theta = \Pi_1 \cup \dots \cup \Pi_r.
\]
Whenever there is no need to concern ourselves with the defining polynomials $\pi_1,\dots,\pi_b$, we will label a partition simply as $\mathbb{R}^n = \Pi_1 \cup \dots \cup \Pi_r$.
\begin{definition}[Chambers]\label{def:cham}
Let  $\{\Pi_{\theta} : \theta \in \Theta_b \}$ be a semialgebraic partition of $\mathbb{R}^n$. A piece of full dimension $n$ is called a \emph{chamber} \cite{Aguiar} if $\{\Pi_{\theta} : \theta \in \Theta_b \}$ is a hyperplane partition and more generally a \emph{semialgebraic chamber}.
\end{definition}

We may now formally define splines and semialgebraic splines.
\begin{definition}[Splines]\label{def:piecewise}
Let $\{\Pi_{\theta} : \theta \in \Theta_b \}$  be semialgebraic partition of $\mathbb{R}^n$ of degree $d'$. A continuous function $f: \mathbb{R}^n \to \mathbb{R}$ is a \emph{semialgebraic spline} of degree $d$  if $f$ restricts to  a polynomial  $g_\theta  \in \mathbb{R}[x_1, \dots, x_n]$ of degree not more than $d$ on $\Pi_\theta$ whenever $\Pi_\theta \ne \varnothing$.
A semialgebraic spline on a hyperplane partition, i.e., $d' = 1$, is called a \emph{spline}.
\end{definition}
To be clear, we have written
\[
d \coloneqq \max \{ \deg g_\theta : \theta \in \Theta_b \},  \qquad  d' \coloneqq \max \{ \deg \pi_i : i = 1, \dots, b\}
\]
to distinguish the two different degrees tied to a semialgebraic spline.

We will be invoking this simple observation several times later: Given a semialgebraic partition $\mathbb{R}^n = \Pi_1 \cup \dots \cup \Pi_r$, assume without loss of generality that $\Pi_1,\dots,\Pi_p$, $p \le r$, are all the chambers in this partition. Let $V_i$ be the (Euclidean) closure of $\Pi_i$, $i = 1,\dots,p$. Then $\mathbb{R}^n = V_1 \cup \dots \cup V_p$. By continuity, the restriction of $f$ to $\Pi_i$ completely determines $f$ on $V_i$, $i=1,\dots,p$.  A consequence is that $g_{p+1},\dots,g_r$, i.e., restrictions of $f$ to non-chamber pieces, do not play a role in our results.

\begin{definition}[Max-definable function]\label{def:maxdef}
A \emph{max-definable functions} in the variables $x_1,\dots,x_n$ is a function $f : \mathbb{R}^n \to \mathbb{R}$  generated by indeterminates $x_1, \dots, x_n$ and constants $c\in \mathbb{R}$ under four operations: addition $(x,y) \mapsto x+y$, multiplication $(x,y) \mapsto x \cdot y$, maximization $(x,y) \mapsto \max(x,y)$, and scalar multiplication $(c, x) \mapsto cx$.
\end{definition}
Note that minimization comes for free in Definition~\ref{def:maxdef} as $\min(x,y) \coloneqq -\max(-x,-y)$. The identity
\[
x\max(y,0) = \max\bigl(\min(xy, x^2y+y), \min(0, -x^2y-y)\bigr)
\]
allows us to exchange the order of multiplication and maximization using this rule. Consequently any max-definable functions can be written in the form $\max_{i=1,\dots,p}\min_{j=1,\dots,p'} g_{ij}$ as in the statement of Pierce--Birkhoff conjecture \cite{HI}. We will prove that any spline is max-definable, i.e., Conjecture~\ref{conj:PB} is true for any $d$ and $n$ when $d' = 1$.

\section{Existential proof}\label{sec:noneff}

The proof in this section is based on the notion of separating ideals \cite{Madden1989}. We will present a simplified treatment defined for two semialgebraic sets $U, V$, which is all we need, after recalling some real algebraic geometry \cite{bochnak2013real, Madden1989}.

The real spectrum of a ring $R$, denoted $\Spec_{\mathbb{R}}(R)$, is a set of pairs $(\mathfrak{p}_\alpha, >_\alpha) \eqqcolon \alpha$, where $\mathfrak{p}_\alpha$ is a prime ideal and $>_\alpha$ is a total order on $R/\mathfrak{p}_\alpha$ \cite[Proposition~7.1.2]{bochnak2013real}.  For any $f \in R$, let
\begin{equation}\label{eq:extend}
f(\alpha) \coloneqq f + \mathfrak{p}_\alpha \in R/\mathfrak{p}_\alpha.
\end{equation}
If $\mathfrak{p}_\alpha \subseteq \mathfrak{p}_\beta$ and the natural map $R/\mathfrak{p}_\alpha \to R/\mathfrak{p}_\beta$ preserves order, we write $\alpha \subseteq \beta$ and say that $\beta$ \emph{specializes} $\alpha$. The topology on $\Spec_{\mathbb{R}}(R)$ is generated by basic open sets of the form
\[
\widetilde{B}(f) = \{\alpha \in \Spec_{\mathbb{R}}(R)  : f(\alpha) >_\alpha 0\}
\]
over all $f \in R$. Under this topology, $\beta$ specializes $\alpha$ if and only if $\beta$ is in the closure of $\{\alpha\}$  \cite[Proposition~7.1.18]{bochnak2013real}.

Henceforth, we will limit our attention to $R \coloneqq \mathbb{R}[x_1, \dots, x_n]$.  We identify $a \in \mathbb{R}^n$ with its vanishing ideal $\mathfrak{m}_a$ and let the unique order on  $\mathfrak{m}_a$ be given by $f < g$ if and only if $f(a) < g(a)$. This defines an injection $\mathbb{R}^n \to \Spec_{\mathbb{R}}(R)$ and we may identify $\mathbb{R}^n$ with its image in $\Spec_{\mathbb{R}}(R)$. Given a semialgebraic set $V$ defined by a Boolean combination of sets of the form
\[
B(f) = \{a \in \mathbb{R}^n  : f(a) > 0\}
\]
over some collection of  $f\in R$, the same Boolean combination of $\widetilde{B}(f)$ defines a set $\widetilde{V}$ in $ \Spec_{\mathbb{R}}(R) $. We also set $\widetilde{\varnothing} = \varnothing$. Clearly $\widetilde{V} \cap \mathbb{R}^n = V$. In fact $\widetilde{V}$ is uniquely determined by $V$ and does not depend on the choice of Boolean expressions used to define $V$ \cite[Proposition 7.2.2]{bochnak2013real}.

For any $f \in R$, the polynomial function $f : \mathbb{R}^n \to \mathbb{R}$ extends to a function $f : \Spec_{\mathbb{R}}(R) \to \mathbb{R}$ as in \eqref{eq:extend}. Note that $f \ge 0$ on $V$ if and only if $f \ge 0$ on $\widetilde{V}$. Hence a semialgebraic partition of $\mathbb{R}^n$ gives a partition of $\Spec_{\mathbb{R}}(R)$. A semialgebraic spline $f$ may then be extended to $\Spec_{\mathbb{R}}(R)$ in a natural way: If $f$ is as in \eqref{eq:def}, then it can be extended to $\Spec_{\mathbb{R}}(R)$ as
\[
f = g_1 \mathbbm{1}_{\widetilde{\Pi}_1} + \dots +  g_r \mathbbm{1}_{\widetilde{\Pi}_r},
\]
where $g_i : \Spec_{\mathbb{R}}(R) \to \mathbb{R}$ extends the polynomial function $g_i$, $i =1,\dots,r$. We will write $f_\alpha \in R$ for a polynomial that agrees with $f$ at $\alpha \in  \Spec_{\mathbb{R}}(R)$. The choice of $f_{\alpha}$ will not affect any result in this article. Lastly, by virtue of the way they are defined, addition, multiplication, and maximum of two semialgebraic splines over $\mathbb{R}^n$ extend naturally over $\Spec_{\mathbb{R}}(R)$.

We will define separating ideal as in \cite{Madden1989} and introduce a slight generalization.
\begin{definition}[Separating ideal]
Let $\alpha, \beta \in \Spec_{\mathbb{R}}(R)$. We write $\langle \alpha, \beta \rangle$ for the ideal generated by all $f \in R$ such that $f(\alpha) \ge 0$ and $f(\beta) \le 0$.  This is called the separating ideal of $\alpha$ and $\beta$. Let $U, V$ be semialgebraic sets. Likewise we will write $\langle U, V \rangle$ for the ideal of $R$ generated by all $f \in R$ such that $f \ge 0$ on $U$ and $f \le 0$ on $V$, and call it the separating ideal of $U$ and $V$.
\end{definition}

Since $f \ge 0$ on $U$ if and only if $f \ge 0$ on $\widetilde{U}$ for any $f \in R$, the latter definition extends unambiguously to $\langle \widetilde{U}, \widetilde{V} \rangle$. It is clear that if $\alpha \in \widetilde{U}$ and $\beta \in \widetilde{V}$, then $\langle U, V \rangle \subseteq \langle \alpha, \beta \rangle$. For easy reference, we reproduce the main result of \cite{Madden1989}.
\begin{theorem}[Madden, 1989]\label{thm:madden}
A semialgebraic spline $f$ is max-definable if and only if for any $\alpha, \beta \in \Spec_{\mathbb{R}}(R)$, $f_\alpha - f_\beta \in \langle \alpha, \beta \rangle$.
\end{theorem}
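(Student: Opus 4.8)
The plan is to prove the equivalence by handling its two implications separately. The forward implication---a max-definable spline satisfies the separating-ideal condition---is an elementary manipulation once we pass to the normal form $\max_i\min_j f_{ij}$ recorded in Section~\ref{sec:splines}; the converse is the substantive half and rests on the quasi-compactness of $\Spec_{\mathbb{R}}(R)$.

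\emph{Necessity.} Write a max-definable $f$ as $f=\max_{i=1}^{p}\min_{j=1}^{p'}f_{ij}$ with $f_{ij}\in R$ and fix $\alpha,\beta\in\Spec_{\mathbb{R}}(R)$. Working in the ordered residue fields, I would choose $a=\argmax_i\min_j f_{ij}(\alpha)$ and $p^{*}=\argmin_j f_{aj}(\alpha)$, so that $f_\alpha=f_{ap^{*}}$; symmetrically $b,q^{*}$ with $f_\beta=f_{bq^{*}}$; and $r^{*}=\argmin_j f_{aj}(\beta)$, $s^{*}=\argmin_j f_{bj}(\alpha)$. Then I would use the telescoping identity
\[
f_\alpha-f_\beta=(f_{ap^{*}}-f_{ar^{*}})+(f_{ar^{*}}-f_{bs^{*}})+(f_{bs^{*}}-f_{bq^{*}})
\]
and check that each parenthesis lies in $\langle\alpha,\beta\rangle$: the first is $\le 0$ at $\alpha$ (as $p^{*}$ minimizes row $a$ there) and $\ge 0$ at $\beta$ (as $r^{*}$ minimizes row $a$ there), so its negative is a generator; the last is handled the same way; and the middle one is itself a generator, because
\[
f_{ar^{*}}(\alpha)\ \ge\ \min_j f_{aj}(\alpha)\ =\ f(\alpha)\ \ge\ \min_j f_{bj}(\alpha)\ =\ f_{bs^{*}}(\alpha),
\]
while the mirror chain $f_{ar^{*}}(\beta)=\min_j f_{aj}(\beta)\le f(\beta)=\min_j f_{bj}(\beta)\le f_{bs^{*}}(\beta)$ reverses the sign at $\beta$. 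Summing gives $f_\alpha-f_\beta\in\langle\alpha,\beta\rangle$.

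\emph{Sufficiency.} Assume $f_\alpha-f_\gamma\in\langle\alpha,\gamma\rangle$ for all $\alpha,\gamma$; I want to produce a max-definable formula for $f$ on $\Spec_{\mathbb{R}}(R)$, which then restricts to one on $\mathbb{R}^n$. The construction would have two nested compactness steps. First, fix $\alpha$, and for each $\gamma$ expand $f_\alpha-f_\gamma=\sum_k p_k h_k$ with $p_k\in R$, $h_k(\alpha)\ge 0$, $h_k(\gamma)\le 0$, and form the max-definable function $E_{\alpha\gamma}:=f_\alpha+\sum_k|p_k|\max(-h_k,0)$; the aim is that $E_{\alpha\gamma}\ge f_\alpha$ everywhere, $E_{\alpha\gamma}$ agrees with $f$ on a neighbourhood of $\alpha$, and $E_{\alpha\gamma}\ge f$ on a neighbourhood $W_\gamma$ of $\gamma$. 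Covering $\Spec_{\mathbb{R}}(R)$ by the $W_\gamma$ and passing to a finite subcover, the function $\Phi_\alpha:=\max_\ell E_{\alpha\gamma_\ell}$ would then satisfy $\Phi_\alpha\ge f$ globally and $\Phi_\alpha=f$ on a neighbourhood $U_\alpha$ of $\alpha$. Second, cover $\Spec_{\mathbb{R}}(R)$ by the $U_\alpha$, extract $U_{\alpha_1},\dots,U_{\alpha_p}$, and set $g:=\min_i\Phi_{\alpha_i}$: at each point some $\Phi_{\alpha_i}$ equals $f$ while all of them dominate $f$, so $g=f$, exhibiting $f$ as a $\min$ of $\max$es of polynomials (with $E_{\alpha\gamma},\Phi_\alpha,g$ all max-definable, so no circularity with the conclusion).

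\emph{Main obstacle.} I expect essentially all the difficulty to sit in the first compactness step---turning the abstract membership $f_\alpha-f_\gamma\in\langle\alpha,\gamma\rangle$ into an explicit max-definable function that is genuinely tight ($=f$) on an \emph{open} neighbourhood of $\alpha$ while dominating $f$ on an \emph{open} neighbourhood of $\gamma$, rather than merely matching the correct values at those two points. This is exactly where the real-spectrum language earns its keep: one must work with constructible neighbourhoods and the specialization order, both so that quasi-compactness applies to the covers and so that the ``neighbourhood'' claims survive the cases in which $\gamma$ (or $\alpha$) lies on a wall of the partition, where $f$ is locally a join of several polynomial branches; a naive version over $\mathbb{R}^n$ does not obviously close. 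The two quasi-compactness extractions and the bookkeeping that everything stays max-definable are then routine.
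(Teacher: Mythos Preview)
The paper does not prove Theorem~\ref{thm:madden}; it is quoted verbatim from \cite{Madden1989} and used only as a black box to obtain Corollary~\ref{cor:separating}. There is therefore no proof in the paper to compare your proposal against.

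On the merits of your sketch: the necessity half is clean and correct, and the two-layer quasi-compactness scheme for sufficiency is exactly Madden's strategy. The gap you yourself flag is real, however, and is not merely a matter of bookkeeping. Your $E_{\alpha\gamma}=f_\alpha+\sum_k|p_k|\max(-h_k,0)$ satisfies $E_{\alpha\gamma}\ge f_\alpha$ everywhere and $E_{\alpha\gamma}(\alpha)=f(\alpha)$, but the claim that it equals $f$ on an open neighbourhood of $\alpha$ fails on two counts: (i) if some $h_k(\alpha)=0$ the correction term need not vanish on any neighbourhood of $\alpha$; and (ii) even if it does vanish, you only get $E_{\alpha\gamma}=f_\alpha$ there, and $f_\alpha$ is just \emph{a} polynomial with the right value at $\alpha$, not one that agrees with $f$ on a neighbourhood when $\alpha$ lies on a wall of the partition. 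Madden's actual argument does not ask for $E_{\alpha\gamma}=f$ near $\alpha$; it only arranges $\Phi_\alpha\ge f$ globally together with $\Phi_\alpha(\alpha)=f(\alpha)$, and then uses that $\{\Phi_\alpha=f\}$ is a closed constructible set containing $\alpha$, hence (via specialization) a neighbourhood in the constructible topology, which is what the second compactness step needs. Replacing your ``agrees with $f$ near $\alpha$'' by this weaker pointwise-plus-global-inequality statement is the missing move.
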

This alternative formulation of Pierce--Birkhoff conjecture has come to be known as the local Pierce--Birkhoff conjecture \cite{Lucas15}. While it sheds light on the original conjecture, it does not appear to be any easier. Neither the separating ideal $\langle \alpha, \beta \rangle$ nor functions of the form $f_\alpha - f_\beta$ are well understood. For instance, it is known that the separating ideal of disjoint semialgebraic sets $\langle U, V \rangle$ can be zero: Take $U = \{(x, y)\in \mathbb{R}^2 : xy \ge 1, \; x > 0\}$ and $V  = \mathbb{R}^2 \setminus U$, then $\langle U, V \rangle = \{0\}$. There are also examples of dimension three \cite{B88}. But there is no known example of the form $\langle \alpha, \beta \rangle$, which would be a potential counterexample for the local Pierce--Birkhoff conjecture.

Nevertheless, Theorem~\ref{thm:madden} is useful to us in the form of the following corollary.
\begin{corollary}\label{cor:separating}
Let $f$ be a semialgebraic spline over  a semialgebraic partition $\mathbb{R}^n = \Pi_1 \cup \dots \cup \Pi_r$ with chambers $\Pi_1,\dots,\Pi_p$,  $p \le r$. Let $g_i \in \mathbb{R}[x_1, \dots, x_n]$ be the restriction of $f$ to $\Pi_i$, and $V_i $ be the closure of $\Pi_i$, $i =1,\dots,p$. If $g_i - g_j \in \langle V_i, V_j \rangle$, then $f$ is max-definable. 
\end{corollary}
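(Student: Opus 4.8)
The plan is to deduce the corollary directly from Madden's theorem (Theorem~\ref{thm:madden}): I would verify that $f_\alpha - f_\beta \in \langle \alpha, \beta \rangle$ for every pair $\alpha, \beta \in \Spec_{\mathbb{R}}(R)$, which is precisely the criterion there. So fix such a pair. The first step is to locate $\alpha$ and $\beta$ among the closed chambers: since $\mathbb{R}^n = V_1 \cup \dots \cup V_p$ (the closures of the chambers already cover $\mathbb{R}^n$) and the operation $V \mapsto \widetilde V$ respects finite unions, we get $\Spec_{\mathbb{R}}(R) = \widetilde V_1 \cup \dots \cup \widetilde V_p$, so there are indices $i, j \in \{1,\dots,p\}$ with $\alpha \in \widetilde V_i$ and $\beta \in \widetilde V_j$.

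The second step is to identify $f_\alpha$ with $g_i$ and $f_\beta$ with $g_j$. Over $\mathbb{R}^n$ this is the continuity observation already recorded in Section~\ref{sec:splines}: $f$ coincides with $g_i$ on the chamber $\Pi_i$, which is dense in its closure $V_i$, and both functions are continuous, so $f = g_i$ on $V_i$. To push this to the real spectrum I would use that $f - g_i$ is itself a semialgebraic spline for the same partition, so the real-spectrum dictionary ``a polynomial or semialgebraic spline is nonnegative on $V_i$ exactly when it is nonnegative on $\widetilde V_i$'' applies to both $f - g_i$ and $g_i - f$; since both are nonnegative on $V_i$, we conclude $f = g_i$ on all of $\widetilde V_i$. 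In particular $g_i$ is a legitimate choice of $f_\alpha$, and likewise $g_j$ for $f_\beta$; this does not affect membership in $\langle \alpha, \beta \rangle$, since any two choices of $f_\alpha$ differ by an element of $\mathfrak{p}_\alpha$, and $\mathfrak{p}_\alpha \subseteq \langle \alpha, \beta \rangle$ because a polynomial vanishing at $\alpha$ (or its negative) is already a generator of $\langle \alpha, \beta \rangle$.

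The final step is bookkeeping: $f_\alpha - f_\beta = g_i - g_j$ lies in $\langle V_i, V_j \rangle$ by hypothesis, and $\langle V_i, V_j \rangle \subseteq \langle \alpha, \beta \rangle$ because $\alpha \in \widetilde V_i$ and $\beta \in \widetilde V_j$ (the inclusion noted just before Theorem~\ref{thm:madden}). Hence $f_\alpha - f_\beta \in \langle \alpha, \beta \rangle$ for all $\alpha, \beta$, and Theorem~\ref{thm:madden} yields that $f$ is max-definable.

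I expect the only point needing genuine care to be the transfer in the second step, from the equality $f = g_i$ on the real points $V_i$ to the equality $f = g_i$ on the spectral set $\widetilde V_i$: this really uses that $f - g_i$ is a semialgebraic spline, not merely a continuous function, so that the nonnegativity dictionary between $V_i$ and $\widetilde V_i$ is available for it. Everything else is a routine unwinding of the definitions together with Madden's theorem.
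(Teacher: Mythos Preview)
Your proposal is correct and follows exactly the paper's approach: verify Madden's criterion by covering $\Spec_{\mathbb{R}}(R)$ with the $\widetilde V_i$, take $f_\alpha=g_i$ and $f_\beta=g_j$, and use $\langle V_i,V_j\rangle\subseteq\langle\alpha,\beta\rangle$. The paper's proof simply asserts $f_\alpha-f_\beta=g_i-g_j$ in one line, whereas you spell out (correctly) why $g_i$ is a valid choice for $f_\alpha$ on $\widetilde V_i$ and why the choice is immaterial modulo $\mathfrak{p}_\alpha\subseteq\langle\alpha,\beta\rangle$.
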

\begin{proof}
Given any $\alpha, \beta \in \Spec_{\mathbb{R}}(R)$, we need to verify the condition in Theorem~\ref{thm:madden}. Since $V_1, \dots, V_p$ cover $\mathbb{R}^n$, $\widetilde{V}_1, \dots, \widetilde{V}_p$ cover $\Spec_{\mathbb{R}}(R)$. For any $\alpha, \beta$, we can find $\alpha \in \widetilde{V_i}, \beta \in \widetilde{V_j}$, then $f_\alpha - f_\beta = g_i - g_j \in \langle V_i, V_j\rangle \subseteq \langle \alpha, \beta \rangle$. Hence $f$ is max-definable. 
\end{proof}
While Corollary~\ref{cor:separating} has not been explicitly stated anywhere, the proof of Pierce--Birkhoff conjecture for $n =2$ \cite{Mahe1984} essentially uses this simplified version. By a further refinement if necessary, we may assume that the partition $\mathbb{R}^n = \Pi_1 \cup \dots \cup \Pi_r$ gives a cylindrical algebraic decomposition \cite{bochnak2013real}. For $n = 2$, the intersection $V_i \cap V_j$ is either empty, a point, or a curve. It is relatively straightforward to study how two sets in the plane can intersect in these three cases and arrive at the Pierce--Birkhoff conjecture. However, for $n \ge 3$, the only easy case is when $V_i \cap V_j$ is a hypersurface with the separating ideal a principal ideal. When codimension exceeds one, the separating ideal becomes far more complicated. 

Fortunately, if we limit ourselves to splines as in Definition~\ref{def:piecewise}, then the intersections are all convex polytopes, which brings the condition in Corollary~\ref{cor:separating} within reach of standard convex geometry.
\begin{theorem}[Pierce--Birkhoff is true for splines]\label{thm:PBlinear1}
Let $f$ be a spline. Then $f$ is max-definable.
\end{theorem}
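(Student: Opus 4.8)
The plan is to verify the hypothesis of Corollary~\ref{cor:separating}: it suffices to show that for any two chambers $\Pi_i,\Pi_j$ with closures $V_i,V_j$ and polynomial pieces $g_i=f|_{V_i}$, $g_j=f|_{V_j}$, the difference $h:=g_i-g_j$ lies in the separating ideal $\langle V_i,V_j\rangle$. Since the partition comes from a hyperplane arrangement, $V_i$ and $V_j$ are convex polyhedra: if $\pi_1,\dots,\pi_b$ are the defining affine forms and $\theta^i,\theta^j$ are the (nowhere-zero, by full-dimensionality) sign patterns of the two chambers, then $V_i=\{x:\theta^i(k)\pi_k(x)\ge 0\ \forall k\}$ and similarly for $V_j$. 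By continuity of $f$, $h\equiv 0$ on $F:=V_i\cap V_j$. Put $T:=\{k:\theta^i(k)\ne\theta^j(k)\}$, which is nonempty since $i\ne j$; for each $k\in T$ exactly one of $\pm\pi_k$ is $\ge 0$ on $V_i$ and $\le 0$ on $V_j$, so $\pi_k\in\langle V_i,V_j\rangle$. Let $\rho:=\sum_{k\in T}\theta^j(k)\pi_k$; then $\rho$ is a sum of forms each $\ge 0$ on $V_j$ and each $\le 0$ on $V_i$, so $\rho\ge 0$ on $V_j$, $\rho\le 0$ on $V_i$, and $-\rho\in\langle V_i,V_j\rangle$. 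Because a sum of nonnegative quantities vanishes only when each summand does, $V_i\cap\{\rho=0\}=V_j\cap\{\rho=0\}=V_i\cap L=V_j\cap L=F$, where $L:=\bigcap_{k\in T}\{\pi_k=0\}$ (here one uses that the two chambers have equal signs off $T$).

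If $F=\varnothing$ then $V_i,V_j$ are disjoint polyhedra, hence at positive distance, and a standard strict-separation argument (or, concretely, the observation that $\rho$ is then bounded away from $0$ on $V_j$ and bounded away from $0$ from above on $V_i$) gives $1\in\langle V_i,V_j\rangle$, so the claim is trivial. Assume therefore $F\ne\varnothing$. Then $F$ is a common face of $V_i$ and $V_j$, and by the standard description of the affine hull of a polyhedron via its implicit equalities, $\operatorname{aff}(F)=\bigcap_{k\in T\cup T'}\{\pi_k=0\}$, where $T':=\{k\notin T:\pi_k\equiv 0\text{ on }F\}$. A polytope is Zariski dense in its affine hull, so $h$ vanishes on $\operatorname{aff}(F)$, whence $h\in(\pi_k:k\in T\cup T')$. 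The generators with $k\in T$ already lie in $\langle V_i,V_j\rangle$, so everything reduces to showing $\pi_k\in\langle V_i,V_j\rangle$ for every $k\in T'$.

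This last point is the crux, and I expect it to carry the real work. Fix $k\in T'$ and set $\mu:=\theta^i(k)\pi_k$; since $\theta^i(k)=\theta^j(k)$, $\mu$ is $\ge 0$ on both $V_i$ and $V_j$ and vanishes on $F$. The plan is to establish the elementary convex-geometric lemma: \emph{if $P$ is a polyhedron and $\sigma,\mu$ are affine forms with $\sigma\ge 0$ on $P$ and $\mu\equiv 0$ on $P\cap\{\sigma=0\}\ne\varnothing$, then $\mu\le C\sigma$ on $P$ for some constant $C$.} One proves this from a Minkowski--Weyl presentation $P=L_0+\operatorname{conv}\{v_a\}+\operatorname{cone}\{w_c\}$: it is enough to dominate $\mu$ at the finitely many vertices (trivial where $\sigma(v_a)>0$; where $\sigma(v_a)=0$ the vertex lies in $P\cap\{\sigma=0\}$ so $\mu(v_a)=0$), and the only genuine subtlety---a recession ray $w_c$ along which $\sigma$ stays constant while $\mu$ increases---cannot occur, because such a $w_c$ is then a recession direction of the nonempty face $P\cap\{\sigma=0\}$, on which $\mu$ vanishes. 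Applying the lemma with $(P,\sigma)=(V_i,-\rho)$ (legitimate since $V_i\cap\{\rho=0\}=F\ne\varnothing$ and $\mu\equiv 0$ on $F$) yields $C$ with $\mu\le -C\rho$ on $V_i$, i.e.\ $q:=-\mu-C\rho\ge 0$ on $V_i$; and $q=-\mu-C\rho\le 0$ on $V_j$ since there $\mu\ge 0$ and $\rho\ge 0$. Hence $q\in\langle V_i,V_j\rangle$, and $\pi_k=\theta^i(k)\mu=\theta^i(k)\bigl(-q+C(-\rho)\bigr)\in\langle V_i,V_j\rangle$. Thus every generator of $(\pi_k:k\in T\cup T')$ lies in $\langle V_i,V_j\rangle$, so $h\in\langle V_i,V_j\rangle$, and Corollary~\ref{cor:separating} finishes the proof. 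Beyond the lemma, the remaining care is bookkeeping with recession cones and lineality spaces of unbounded chambers; the genuinely algebraic step---reducing Pierce--Birkhoff to membership in $\langle V_i,V_j\rangle$---has already been supplied by Corollary~\ref{cor:separating}, and the rest is convex geometry, exactly as the preceding discussion anticipates.
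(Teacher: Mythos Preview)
Your proof is correct and follows the same overall architecture as the paper: reduce via Corollary~\ref{cor:separating} to showing $g_i-g_j\in\langle V_i,V_j\rangle$, observe that $g_i-g_j$ lies in the vanishing ideal $\mathfrak{a}$ of $\operatorname{aff}(V_i\cap V_j)$, and then argue $\mathfrak{a}\subseteq\langle V_i,V_j\rangle$ by producing enough affine separators. Where the two diverge is in the execution of this last step. The paper passes to the tangent cones of $V_i$ and $V_j$ at a relative-interior point of $V_i\cap V_j$, quotients by their common affine part, and invokes Klee's separation theorem for disjoint pointed cones to obtain a strict separator $h$; it then notes that $h+\varepsilon h'$ still separates for every degree-one $h'\in\mathfrak{a}$ and small $\varepsilon$, giving $\mathfrak{a}\subseteq\langle V_i,V_j\rangle$ in one stroke. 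You instead build the separator explicitly from the arrangement data as $-\rho=-\sum_{k\in T}\theta^j(k)\pi_k$ and prove a domination lemma ($\mu\le C\sigma$ on a polyhedron, via Minkowski--Weyl) to push each remaining generator $\pi_k$, $k\in T'$, into $\langle V_i,V_j\rangle$ one at a time. Your route is more self-contained---no appeal to an external cone-separation result---and stays closer to the defining forms $\pi_k$, which dovetails nicely with the effective analysis in Section~\ref{sec:eff}; the paper's tangent-cone perturbation is shorter and handles all of $\mathfrak{a}$ at once rather than generator by generator.
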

\begin{proof}
Let $f$ be a spline as in Definition~\ref{def:piecewise}. We follow the notations in Corollary~\ref{cor:separating}. Let chambers in the partition be $\Pi_1,\dots,\Pi_p$, and $V_i $ be the closure of $\Pi_i$, $i =1,\dots,p$. For any two $V_i, V_j$. The intersection $V_i \cap V_j$ is a closed convex polytope. Let $\mathfrak{a}$ be the vanishing ideal of the affine span of $V_i \cap V_j$. We claim that $\langle V_i, V_j\rangle = \mathfrak{a}$. 

We first consider the case $V_i \cap V_j \neq \varnothing$. One direction is clear: If $h \in \langle V_i, V_j\rangle$, then $h = 0$ in the closed convex set $V_i \cap V_j$, so $h = 0$ in its affine span, thus $\langle V_i, V_j\rangle \subseteq \mathfrak{a}$. 

Conversely, choose a point $x$ in the relative interior of $V_i \cap V_j$. Let $C_i$ and $C_j$ be the tangent cones of $V_i$ and $V_j$ at $x$. Then $C_i \cap C_j$ is the affine span of $V_i \cap V_j$ and it is the largest affine subset that $C_i$ or $C_j$ contains. Consider the quotients $C_i/(C_i\cap C_j)$ and $C_j/(C_i\cap C_j)$, or, alternatively, the projections of $C_i$ and $C_j$ to the orthogonal complement of $C_i\cap C_j$. These are two closed disjoint pointed polyhedral cones and thus there is a hyperplane $\{x \in \mathbb{R}^n : h(x) = 0 \}$ that strictly separates the two cones \cite{Klee1955}. By definition, the degree-one polynomial $h \in \langle V_i, V_j\rangle$. For any degree-one polynomial $h' \in \mathfrak{a}$, the hyperplane $\{ x  \in \mathbb{R}^n : h'(x) = 0 \}$ contains $C_i \cap C_j$. So $h + \varepsilon h'$ will still separate the two cones for sufficiently small $\varepsilon > 0$. Therefore $h + \varepsilon h' \in \langle V_i, V_j\rangle$ and thus $h' \in \langle V_i, V_j\rangle$. Hence $\mathfrak{a} \subseteq \langle V_i, V_j\rangle$. In conclusion, $\mathfrak{a} = \langle V_i, V_j\rangle$.

If $V_i \cap V_j = \varnothing$, there is also a strictly separating hyperplane \cite[Theorem~1.1.10]{stoer2012}. Hence we can find $h$ so that $h + \varepsilon$ separates $V_i$ and $V_j$ for all sufficiently small $\varepsilon > 0$. Thus $1 \in \langle V_i, V_j\rangle$, i.e., $\langle V_i, V_j\rangle = R$. In this case the affine span of $V_i \cap V_j$ is the empty set and we also get $\langle V_i, V_j\rangle = \mathfrak{a}$.

By its definition, a spline $f$ has $f\rvert_{V_i} = g_i$, $i=1,\dots,p$. Since $g_i - g_j = 0$ is in $V_i \cap V_j$, $g_i-g_j \in \mathfrak{a} = \langle V_i, V_j\rangle$.  By Corollary~\ref{cor:separating}, $f$ is max-definable.
\end{proof}

We will see in Theorem~\ref{thm:eff} that Theorem~\ref{thm:PBlinear1} can be made much more informative, but at some cost to clarity. The abstract approach above, freed of the need to keep track of polynomials, is more likely to generalize to other semialgebraic splines.

\section{Effective bounds}\label{sec:eff}

In this section, we will derive explicit bounds for Theorem~\ref{thm:PBlinear1}, essentially by making effective the abstract constructions in \cite{Madden1989}.
\begin{lemma}\label{lemma:eff1}
Let $f$ be a semialgebraic spline over  a semialgebraic partition $\mathbb{R}^n = \Pi_1 \cup \dots \cup \Pi_r$ with chambers $\Pi_1,\dots,\Pi_p$,  $p \le r$. Let $g_i \in \mathbb{R}[x_1, \dots, x_n]$ be the restriction of $f$ to $\Pi_i$, and $V_i $ be the closure of $\Pi_i$, $i =1,\dots,p$. If for $ i, j = 1,\dots, p$, there exists $h_{ij} \in \mathbb{R}[x_1, \dots, x_n]$ such that $h_{ij} \le g_j-g_i$ on $V_j$ and $h_{ij} \ge 0 $ on $V_i$, then $f$ is max-definable with
\[
f = \max_{i = 1,\dots,p} \min_{j = 1,\dots,p} f_{ij}
\]
for some $f_{ij} \in \mathbb{R}[x_1, \dots, x_n]$ where
\[
\max \{ \deg f_{ij} : i, j = 1,\dots,p\} \le \max \{\deg g_i, \deg h_{ij} : i, j = 1,\dots, p\}.
\]
\end{lemma}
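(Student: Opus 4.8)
The plan is to simply take $f_{ij} \coloneqq g_i + h_{ij}$ and verify by a direct pointwise computation that $\max_{i}\min_{j} f_{ij}$ coincides with $f$ on each chamber closure $V_k$; since $V_1 \cup \dots \cup V_p = \mathbb{R}^n$ (the ``simple observation'' recorded before Definition~\ref{def:maxdef}), this gives the identity everywhere, and a function of this shape is max-definable by Definition~\ref{def:maxdef}. The degree bound then falls out for free, since $\deg f_{ij} = \deg(g_i + h_{ij}) \le \max\{\deg g_i, \deg h_{ij}\}$.

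First I would fix an index $k$ and a point $x \in V_k$, where $f(x) = g_k(x)$, and prove the two inequalities separately. For the lower bound $\max_i \min_j f_{ij}(x) \ge g_k(x)$, I would use the row $i = k$: the hypothesis $h_{kj} \ge 0$ on $V_k$ gives $f_{kj}(x) = g_k(x) + h_{kj}(x) \ge g_k(x)$ for every $j$, hence $\min_j f_{kj}(x) \ge g_k(x)$, and a fortiori the outer maximum is $\ge g_k(x)$. For the upper bound $\max_i \min_j f_{ij}(x) \le g_k(x)$, I would fix an arbitrary row $i$ and use the column $j = k$: the hypothesis $h_{ik} \le g_k - g_i$ on $V_k$ gives $f_{ik}(x) = g_i(x) + h_{ik}(x) \le g_k(x)$, so $\min_j f_{ij}(x) \le f_{ik}(x) \le g_k(x)$; since $i$ was arbitrary, $\max_i \min_j f_{ij}(x) \le g_k(x)$. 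Combining the two inequalities yields $\max_i \min_j f_{ij}(x) = g_k(x) = f(x)$, and letting $k$ and $x$ vary over all chamber closures finishes it.

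I do not expect a genuine obstacle here — the argument is essentially bookkeeping — but the one point that requires care is the asymmetry of the two hypotheses on $h_{ij}$ (a one-sided inequality $h_{ij} \le g_j - g_i$ on $V_j$ versus nonnegativity $h_{ij} \ge 0$ on $V_i$) and matching the correct hypothesis to the correct index at each step: the nonnegativity drives the lower bound via the diagonal row, and the comparison inequality drives the upper bound via the diagonal column. I would also make explicit at the outset that restrictions to non-chamber pieces are irrelevant (by continuity, as noted in the excerpt), so that it genuinely suffices to control $f$ on $V_1, \dots, V_p$.
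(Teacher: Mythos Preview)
Your proposal is correct and is essentially the paper's own argument: define $f_{ij} \coloneqq g_i + h_{ij}$ and verify pointwise on each $V_k$ that $\max_i\min_j f_{ij} = g_k$ using the row $i=k$ for the lower bound and the column $j=k$ for the upper bound. The paper additionally observes that the hypotheses force $h_{ii}=0$, which lets it write $\min_j f_{kj} = f_{kk} = g_k$ as an equality rather than as the conjunction of two inequalities, but this is a cosmetic difference and your two-inequality version is just as clean.
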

\begin{proof}
For any $i, j = 1,\dots, p$, let
\[
f_{ij} \coloneqq h_{ij}+g_i.
\]
So the assumption becomes $f_{ij} \le g_j$  on $V_j$ and $f_{ij} \ge g_i $ on $V_i$. For $i = j$, the two conditions on $h_{ij}$ forces $h_{ii} = 0$, so $f_{ii} = g_i$. Set
\[
f_0 \coloneqq \max_{i = 1,\dots,p} \min_{j = 1, \dots, p}f_{ij}.
\]
On $V_k$, for any $i \neq k$,
\[
\min_{j = 1,\dots,p} f_{ij} \le f_{jk} \le g_k,
\]
and for $i = k$,
\[
\min_{j = 1,\dots,p} f_{ij} = f_{ii} = g_k.
\]
So
\[
f_0 = \max_{i = 1,\dots,p} \min_{j = 1,\dots,p} f_{ij} = g_k
\]
on $V_k$. Since $f\rvert_{V_k} = g_k$, $k=1,\dots,p$, we have $f_0 = f$ and the degree bound follows.
\end{proof}

We now establish our final result.
\begin{theorem}[Effective Pierce--Birkhoff for splines]\label{thm:eff}
Let $f$ be a spline over a hyperplane partition $\mathbb{R}^n = \Pi_1 \cup \dots \cup \Pi_r$ induced by degree-one polynomials $P = \{\pi_1, \dots, \pi_b\}$. Let $\Pi_1,\dots,\Pi_m$,  $m \le r$, be the chambers, $g_i \in \mathbb{R}[x_1, \dots, x_n]$ be the restriction of $f$ to $\Pi_i$, and $V_i $ be the closure of $\Pi_i$, $i =1,\dots,m$. If  $d  = \max \{ \deg g_i : i = 1,\dots,m\}$, then $f$ is max-definable with
\[
f = \max_{i = 1,\dots,p} \min_{j = 1,\dots,p} f_{ij}
\]
for some $f_{ij} \in \mathbb{R}[x_1, \dots, x_n]$, $\deg f_{ij} \le 2d+1$, and
\[
p = O\bigl( b^{2n^2}d^n \bigr)
\]
for any fixed $n \in \mathbb{N}$.
\end{theorem}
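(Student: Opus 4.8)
The plan is to meet the hypotheses of Lemma~\ref{lemma:eff1} by an explicit, quantitative version of the proof of Theorem~\ref{thm:PBlinear1}. For each ordered pair of chambers $(V_i,V_j)$ I want $h_{ij}\in\mathbb R[x_1,\dots,x_n]$ with $\deg h_{ij}\le 2d+1$, $h_{ij}\ge 0$ on $V_i$, and $h_{ij}\le g_j-g_i$ on $V_j$; then Lemma~\ref{lemma:eff1} gives $f=\max_i\min_j f_{ij}$ with $f_{ij}=h_{ij}+g_i$ of degree $\le 2d+1$, and the index range $p$ is the number of chambers of whichever partition the lemma is applied to. Write $g\coloneqq g_j-g_i$; recall $g$ vanishes on $V_i\cap V_j$ by continuity of $f$.

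If $V_i\cap V_j\neq\varnothing$, fix $x$ in its relative interior, translate $x$ to $0$, and let $C_i,C_j$ be the tangent cones of $V_i,V_j$ there, with $A=C_i\cap C_j$ the linear span of $V_i\cap V_j$, say $A=\{x_1=\dots=x_c=0\}$. Since $g$ vanishes on $A$, write $g=\sum_{k=1}^c x_k u_k$ with $\deg u_k\le d-1$. Let $h_0$ be a linear form strictly separating the pointed cones $C_i/A$ and $C_j/A$ (as in Theorem~\ref{thm:PBlinear1}), so that $h_0\ge 0$ on $V_i$ and $h_0\le 0$ on $V_j$; strict separation plus compactness of a cross-section of $C_j/A$ yields $\delta>0$ with $\sum_{k=1}^c x_k^2\le\delta^{-2}h_0^2$ on $C_j$, hence on $V_j$ since the polyhedron $V_j$ is contained in its own tangent cone $C_j$. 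Then $h_{ij}\coloneqq h_0(1+\delta^{-2}\sum_{k=1}^c u_k^2)$ works: on $V_i$ the second factor is positive and $h_0\ge 0$, so $h_{ij}\ge 0$; on $V_j$, Cauchy--Schwarz and the cone bound give $g\ge\delta^{-1}h_0(\sum_k u_k^2)^{1/2}$, while $h_0\le 0$ together with $1+\delta^{-2}\sum_k u_k^2\ge\delta^{-1}(\sum_k u_k^2)^{1/2}$ give $h_{ij}\le\delta^{-1}h_0(\sum_k u_k^2)^{1/2}\le g$; and $\deg h_{ij}\le 1+2(d-1)=2d-1$. If instead $V_i\cap V_j=\varnothing$, then $V_i-V_j$ is a polyhedron omitting $0$, so its nearest-point functional, after an affine normalization, is a linear form $\ell$ with $\ell\ge 1$ on $V_i$ and $\ell\le-1$ on $V_j$; take $h_{ij}\coloneqq\ell(1+g^2)$, which is $\ge 1>0$ on $V_i$ and $\le-(1+g^2)\le-|g|\le g$ on $V_j$, with $\deg h_{ij}\le 1+2d=2d+1$. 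Finally $h_{ii}=0$.

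For the count I would first refine the arrangement, adjoining to $\pi_1,\dots,\pi_b$ the $\binom m2=O(b^{2n})$ polynomials $g_i-g_j$, each of degree $\le d$. The resulting semialgebraic partition still carries $f$ --- every new piece lies inside a single old chamber, whose $g_i$ it inherits --- and on each new chamber every difference $g_i-g_j$ has constant sign, so for a new pair $(V_i,V_j)$ one may take $h_{ij}=0$ whenever $g_j\ge g_i$ on $V_j$ and the construction of the previous paragraph otherwise. By the Oleinik--Petrovsky and Milnor--Thom cell-count bound, an arrangement of $s=b+O(b^{2n})=O(b^{2n})$ polynomials of degree $\le d$ in $\mathbb R^n$ has $O((sd)^n)=O(b^{2n^2}d^n)$ full-dimensional chambers; Lemma~\ref{lemma:eff1} applied to them yields $f=\max_{i=1}^p\min_{j=1}^p f_{ij}$ with $p=O(b^{2n^2}d^n)$ and $\deg f_{ij}\le 2d+1$. (Applied directly to the original $m=O(b^n)$ chambers one gets the same conclusion with the smaller range $p=m$.)

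The step I expect to be the real obstacle is the construction of $h_{ij}$ when $V_i\cap V_j\neq\varnothing$: one must exhibit a single polynomial that is nonnegative on $V_i$ while simultaneously dipping below the degree-$d$ function $g_j-g_i$ everywhere on the possibly unbounded polyhedron $V_j$, with no sign pattern available there. The tangent-cone/domination device above is what lets a \emph{linear} separating form suffice and holds the degree at $2d-1$ rather than something growing with $n$; the delicate point is the claim that the cone inequality $\sum_k x_k^2\le\delta^{-2}h_0^2$, a priori only a statement about the local cone $C_j$, in fact holds on all of $V_j$ --- which is exactly where convexity of the pieces (so that $V_j\subseteq C_j$) enters, and is the feature special to \emph{hyperplane} partitions that makes the whole argument go through.
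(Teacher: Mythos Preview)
Your construction is correct and takes a genuinely different route from the paper.  The paper writes $g_i-g_j=\sum_k\pi_{ijk}\rho_{ijk}$ with several linear separators $\pi_{ijk}$, then \emph{refines} the partition by adjoining all $g_i-g_j$, $\rho_{ijk}$, and $\rho_{ijk}-1$ to $P$; on each refined chamber the signs of these are constant, and a four-case choice $\sigma_{ijk}\in\{0,1,\rho_{ijk},\rho_{ijk}^2\}$ gives $h_{ij}=\sum_k\pi_{ijk}\sigma_{ijk}$ satisfying Lemma~\ref{lemma:eff1}.  The refinement is essential there, and counting its chambers is what produces the bound $p=O(b^{2n^2}d^n)$.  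Your device is cleaner: a \emph{single} separating linear form $h_0$ multiplied by the sum-of-squares amplifier $1+\delta^{-2}\sum_k u_k^2$, with the cone domination $\sum_k x_k^2\le\delta^{-2}h_0^2$ on $V_j\subseteq C_j$ replacing the sign-pattern case analysis.  Because this $h_{ij}$ works uniformly for every pair of original chambers, Lemma~\ref{lemma:eff1} applies directly with $p=m=O(b^n)$, strictly sharper than the stated bound; your refinement paragraph is superfluous, as you yourself observe.  (In the disjoint case one should note that $V_i-V_j$ is a polyhedron, hence closed, so the strict separation and the subsequent affine normalization to $\ell\ge 1$ on $V_i$, $\ell\le -1$ on $V_j$ go through.)  The paper's approach stays closer to Madden's separating-ideal picture; yours is both simpler and quantitatively stronger.
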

\begin{proof}
Let $i,j \in \{1,\dots,m\}$. 
Let $\mathfrak{a}$ be the vanishing ideal of the affine span of $V_i \cap V_j$. As we saw in the proof of Theorem~\ref{thm:PBlinear1}, $\mathfrak{a} = \langle V_i, V_j\rangle$. When $V_i \cap V_j \neq \varnothing$, $\mathfrak{a}$ is an ideal generated by degree-one polynomials and $g_i-g_j$ can then be written as
\begin{equation}\label{eq:gigi}
g_i - g_j = \sum_{k=1}^q \pi_{ijk} \rho_{ijk}
\end{equation}
where $\deg \pi_{ijk} = 1$, $\deg \rho_{ijk} \le \deg (g_i - g_j) -1$, and $q \coloneqq q_{ij}$ is understood to depend on $i,j$ but denoted as $q$ to avoid clutter (we will later see that $q_{ij} \le n+1$ irrespective of $i,j$). Furthermore, the construction in the proof of Theorem~\ref{thm:PBlinear1} shows that we may choose $\pi_{ijk}$ so that they separate $V_i$ and $V_j$. When $V_i \cap V_j = \varnothing$, $g_i-g_j$ can similarly be expressed as in \eqref{eq:gigi}, where $\pi_{ijk}$ are degree-one separating polynomials and $\deg \rho_{ijk} \le \deg (g_i - g_j)$.  Now we add these polynomials to $P$ to obtain
\begin{equation}\label{eq:P'}
P' = P  \cup \{ g_i - g_j, \; \rho_{ijk}, \; \rho_{ijk}-1 : i,j = 1,\dots, m, \; k = 1,\dots,q \}.
\end{equation}
Then $P'$ induces a finer semialgebraic (but not necessarily hyperplane) partition $\Pi'$. Let
\[
\{V_{i \ih } : i = 1,\dots,m, \; \ih  = 1, \dots, p_i\}
\]
be an exhaustive list of the closures of chambers in $\Pi'$ so that we have
\[
\mathbb{R}^n = \bigcup_{i =1}^m V_i \quad\text{and}\quad
V_i = \bigcup_{\ih =1}^{p_i} V_{i \ih }, \quad i =1,\dots,m.
\]
Let $p \coloneqq  p_1 + \dots + p_m$. This refined partition $\Pi'$ has the property that any $g_i- g_j$ and any $\rho_{ijk}$ and $\rho_{ijk}-1$ is either nonnegative or nonpositive on any $V_{i \ih }$.

Take any pair $V_{i \ih }$ and  $V_{j\jh }$. Suppose $i = j$. Then $f\rvert_{V_{i \ih }} = f\rvert_{V_{j\jh }} = g_i$. So $0$ satisfies the condition in Lemma~\ref{lemma:eff1} that $0 \le g_i-g_i$ on $V_{i \ih }$ and $0 \ge 0 $ on $V_{j\jh }$. Suppose $i \neq j$. If $g_i - g_j \ge 0$ on $V_i$, then $0$ satisfies the condition that $0 \le g_i-g_j$ on $V_{i \ih }$ and $0 \ge 0 $ on $V_{j\jh }$. It remains to consider the case $g_i - g_j \le 0$ on $V_i$. Write $g_i - g_j$ as in \eqref{eq:gigi}. We may choose the sign of $\pi_{ijk}$ and $\rho_{ijk}$ so that $\pi_{ijk} \ge 0$ on $V_{i \ih }$, and $\pi_{ijk} \le 0$ on $V_{j\jh }$. Now define
\[
\sigma_{ijk} \coloneqq \begin{cases}
0 &\rho_{ijk} \le 0  \text{ on } V_{i \ih };\\
\rho_{ijk} &\rho_{ijk} \ge 0  \text{ on } V_{i \ih },\; \rho_{ijk} \ge 0  \text{ on } V_{j\jh };\\
1 &0 \le \rho_{ijk} \le 1  \text{ on } V_{i \ih },\; \rho_{ijk} \le 0  \text{ on } V_{j\jh };\\
\rho_{ijk}^2 &\rho_{ijk} \ge 1  \text{ on } V_{i \ih },\; \rho_{ijk} \le 0  \text{ on } V_{j\jh }.
\end{cases}
\] 
It is straightforward to check that $g_i - g_j \le \sum_{k=1}^q \pi_{ijk} \sigma_{ijk}$ on $V_{i \ih }$,  $\sum_{k=1}^q \pi_{ijk} \sigma_{ijk} \ge 0$ on $V_{j\jh }$, and
\[
\deg \biggl( \sum_{k=1}^q \pi_{ijk}\sigma_{ijk}  \biggr) \le 2\deg (g_i - g_j) +1 \le 2d +1.
\]
Hence by Lemma~\ref{lemma:eff1}, $f$ can be written as
\[
f = \max_{i = 1, \dots, p} \min_{\ih  = 1, \dots, p} f_{i \ih }
\]
where $f_{i \ih } \in \mathbb{R}[x_1, \dots, x_n]$ has the desired degree bound $2d +1$.

To obtain the bound for $p$, we need to count the number of chambers in the semialgebraic partition $\Pi'$. We started from the hyperplane partition $\Pi$ induced by $P$. Since $\lvert P \rvert = b$, the partition $\Pi$ is cut out by $b$ hyperplanes and it can have at most $O(b^n)$ chambers \cite[Chapter~6]{GTM212}. So $m = O(b^n)$. The refined semialgebraic partition $\Pi'$ is induced by $P'$ in \eqref{eq:P'}. There are $m^2 = O(b^{2n})$ polynomials of the form $g_i - g_j$ in $P'$. For each fixed $i, j \in \{1,\dots,m\}$, we may choose $\pi_{ijk}$ in \eqref{eq:gigi}  so that the number of summands $q = q_{ij} \le n+1$. To see this, first suppose $V_i\cap V_j \ne \varnothing$. Let $\dim(V_i\cap V_j) = n_{ij} \le n$ and let $h_1, \dots, h_{n - n_{ij}}$ generate $\langle V_i, V_j \rangle$. Then we may choose $\pi_{ijk}$ to be of the forms $h, h + \varepsilon h_1, \dots, h + \varepsilon h_{n - n_{ij}}$ where $\varepsilon > 0$ is a fixed constant as in the proof of Theorem~\ref{thm:PBlinear1}. Thus $q_{ij} \le n - n_{ij} + 1 \le n+1$. Now suppose $V_i\cap V_j = \varnothing$. Then we may choose $\pi_{ijk}$  to be of the two forms $h, h + \varepsilon$. Thus $q_{ij} \le 2 \le n+1$. Hence there are at most $2m^2 (n+1) = O(b^{2n})$ polynomials of the form $\rho_{ijk}$ or $\rho_{ijk}-1$ in $P'$. All included, we obtain
\[
\lvert P' \rvert \le b + m^2 + 2m^2(n + 1) = O(b) + O(b^{2n}) + O(b^{2n}) = O(b^{2n}).
\]
Since the polynomials in $P'$ have degrees $d' \le 2d$, it follows from \cite[Chapter~7]{basu2003} that $p$, the number of semialgebraic chambers in $\Pi'$, is bounded by
\[
O\bigl((\lvert P'\rvert \cdot d')^n \bigr) = O\bigl((b^{2n} \cdot 2d)^n\bigr) = O\bigl(b^{2n^2}d^n \bigr)
\]
as required.
\end{proof}
A slightly more careful count would yield an explicit expression for the leading constant $c_n$, which depends only on $n$, in the $O\bigl(b^{2n^2}d^n \bigr)$ bound. We see little point in providing it since but the bound for $p$ is already exponential in $n$.

\section{Conclusion}

While our goal in this article is purely mathematical, there is one real-world consequence that is perhaps worth highlighting. By \cite[Theorem~3.8]{lai2024}, it follows from Theorem~\ref{thm:PBlinear1} that every spline is a ReLU-activated transformer.

\bibliographystyle{abbrv}

\end{document}